\tikzstyle{every picture}+=[remember picture]
\tikzstyle{na} = [baseline=-.5ex]
\newtheorem{theorem}{Theorem}
\newtheorem{definition}{Definition}
\newtheorem{lemma}{Lemma}
\newtheorem{remark}{\textbf{Remark}}
\newtheorem{assumption}{Assumption}
\DeclareMathOperator*{\argmin}{argmin}
\newcommand{\G}{\mathcal{G}}
\newcommand{\La}{\mathcal{L}}
\newcommand{\R}{\mathbb{R}}
\newcommand{\A}{\mathcal{A}}
\newcommand{\D}{\mathcal{D}}
\newcommand{\E}{\mathcal{E}}
\newcommand{\V}{\mathcal{V}}
\newcommand{\N}{\mathcal{N}}
\newcommand{\sgn}{\mathrm{sgn}}
\newcommand{\SGN}{\mathrm{SGN}}
\newcommand{\1}{\mathbf{1}}
\newcommand{\0}{\mathbf{0}}
\crefname{assumption}{Assumption}{Assumptions}
\def\BibTeX{{\rm B\kern-.05em{\sc i\kern-.025em b}\kern-.08em
    T\kern-.1667em\lower.7ex\hbox{E}\kern-.125emX}}
\title{\LARGE \bf
Distributed Continuous-Time Optimization with Time-Varying Objective Functions and Inequality Constraints
}
\author{{Shan Sun, Wei Ren}
\thanks{This work was supported by National Science Foundation under Grant ECCS-1920798. S. Sun and W. Ren are with the Department of Electrical and Computer Engineering, University of California at Riverside, Riverside, CA, 92521 USA (e-mail: ssun029@ucr.edu, ren@ece.ucr.edu).}}
\begin{document}

\maketitle
\thispagestyle{empty}
\pagestyle{empty}

\begin{abstract}
This paper is devoted to the distributed continuous-time optimization problem with time-varying objective functions and time-varying nonlinear inequality constraints. Different from most studied distributed optimization problems with time-invariant objective functions and constraints, the optimal solution in this paper is time varying and forms a trajectory. To minimize the global time-varying objective function subject to time-varying local constraint functions using only local information and local interaction, we present a distributed control algorithm that consists of a sliding-mode part and a Hessian-based optimization part. The asymptotical convergence of the proposed algorithm to the optimal solution is studied under suitable assumptions. The effectiveness of the proposed scheme is demonstrated through a simulation example. 
\end{abstract}
\section{Introduction}
Distributed optimization algorithms allow for decomposing certain optimization problems into smaller, more manageable sub-problems that can be solved in parallel. Therefore, they are widely used to solve large-scale optimization problems such as optimization of network flows \cite{mainapp2}, big-data analysis \cite{mainapp5}, design of sensor networks \cite{mainapp6}, multi-robot teams \cite{mainapp3}, and resource allocation \cite{mainapp1}. There has been significant attention on distributed convex optimization problems, where the goal is to cooperatively seek the optimal solution that minimizes the sum of private convex objective functions available to each individual agent. In this context, discrete-time distributed optimization algorithms have been studied extensively (see e.g., \cite{DO1,survey} and references therein). 

There exists another body of literature on distributed continuous-time optimization algorithms (see e.g., \cite{DOpeng,DOpeng1,DOren,DOjun3,zhirong,jielu,yiguang}). The distributed continuous-time optimization algorithms have applications in coordinated control of multi-agent teams. For example, multiple physical robots modeled by continuous-time dynamics might need to track a team optimal trajectory. Note that most studies in the literature focus on stationary optimization problems in which both the objective functions and constraints do not explicitly depend on time. However, in many applications, the local performance objectives or engineering constraints may evolve in time, reflecting the fact that the optimal solution could be changing over time and create a trajectory (see e.g., \cite{app4,app3,thesis,thesis1}), which makes the design and analysis much more complex. Moreover in practical optimization problems, constraints are always inevitable. In this paper, we are interested in the distributed continuous-time algorithms for time-varying constrained optimization problems. 

The literature on the distributed continuous-time algorithms for time-varying optimization problems focuses on first-order gradient methods \cite{hu,boxcons} and second-order optimization methods \cite{Rahili}. Specifically, \cite{hu} and \cite{boxcons} solve the distributed time-varying optimization problems with convex set constraints using, respectively, the projected gradient method and gradient-based penalty function method. However, they are limited to solve certain kinds of optimization problems (e.g., \cite{hu}: quadratic objective functions; \cite{boxcons}: linear programming) and there exist tracking errors to the optimal solutions. Although the second-order optimization methods work well in centralized time-varying optimization problems (see e.g., \cite{thesis,thesis1,Fazlyab}), their use in distributed settings has been prohibited as they require global information of the network to compute the inverse of the global Hessian matrix. Ref. \cite{Rahili} solves the distributed time-varying optimization problems using second-order optimization methods. However, the consensus-based algorithm in \cite{Rahili} (Section III.B) is limited to the unconstrained problem with local objective functions that have identical Hessians. While the estimator-based algorithm in \cite{Rahili} (Section III.C) can deal with certain objective functions with nonidentical Hessians, it relies on the distributed average tracking techniques \cite{Chen} and hence poses restrictive assumptions that the derivatives of the Hessians and the gradients of the local objective functions are bounded. In addition, because the estimator-based algorithm has to estimate the Hessian inverse of the global objective function, it necessitates the communication of certain virtual variables between neighbors with increased computation costs. While it is possible to convert the constrained optimization problem to an unconstrained one using penalty methods, the resulting penalized objective functions would not have identical Hessians due to the involvement of the nonuniform
local constraint functions (even if the original objective functions would), and they might not satisfy the restrictive assumptions mentioned above. As a result, the algorithms in \cite{Rahili} cannot be applied to address the distributed time-varying constrained optimization problem (see Remark \ref{remark:comparison} for a more detailed comparison). For discrete-time distributed online optimization algorithms, the readers are referred to \cite{onlineop4} and references therein. 
 
This paper aims to develop a distributed algorithm to solve the continuous-time optimization problem with private time-varying objective functions and time-varying nonlinear inequality constraints. In this work, the time-varying optimization problem is deformed as a consensus subproblem and a minimization subproblem on the team objective function. We develop a sliding-mode method with a Hessian-dependent gain for all the agents to achieve consensus on the states. Meanwhile, a Hessian-based (second-order) optimization method coupled with the log-barrier penalty functions is proposed to track the local time-varying optimal solution. To implement the algorithm, each agent just needs its own state and the relative states between itself and its neighbors. When the agents' states are their positions, the algorithm can be implemented based on purely local sensing (e.g., absolute and relative positions) without the need for communicating virtual variables. The asymptotical convergence to the optimal solution is established based on nonsmooth analysis, Lyapunov theory and convex optimization theory. Numerical simulation results are presented to illustrate the effectiveness of the theoretical results. To the best of our knowledge, this is the first work in the literature on distributed continuous-time time-varying constrained optimization problems that guarantees zero tracking errors. 

\section{PRELIMINARIES}
\subsection{Notation}
Let $\R,\R^n$ and $\R^{n\times m}$ denote the sets of real numbers, real vectors of dimension $n$, and real matrices of size $n\times m$, respectively. Let $\R_{>0}$ represent the set of positive real numbers. The cardinality of a set $S$ is denoted by $|S|$. Let $\1_n$ (resp. $\0_n$) denote the vector of $n$ ones (resp. $n$ zeros), and $I_n$ denote the $n\times n$ identity matrix. For a matrix $A\in \R^{m\times n}$, $[A]_{k\bullet}\in\R^{1\times n}$ is the $k$-th row of $A$, and $A^T$ (resp. $A^{-1}$) is the transpose (resp. inverse) of $A$, $\lambda_{\min}(A)$ is the smallest eigenvalue of $A$. For a vector $x=[x_1,\cdots,x_n]^T\in\R^{n\times 1}$, $\mathrm{diag}(x)\in \R^{n\times n}$ represents the diagonal matrix with the elements in the main diagonal being the elements of $x$, $\|x\|_p$ represents the $p$-norm of the vector $x$, $B(x,\delta)$ represents the open ball of radius $\delta$ centered at $x$, and $\sgn(x)=[\sgn(x_1),\cdots,\sgn(x_n)]^T$, where $\sgn(x_i)$ denotes the signum function defined as 
$$\sgn(x_i)=\left\{\begin{array}{lr}-1&\mathrm{if}\ x_i<0,\\0&\mathrm{if}\ x_i=0,\\1& \mathrm{if}\ x_i>0.\end{array}\right.$$
Let $\nabla f(x, t)$ and $\nabla^2 f(x, t)$ denote, respectively, the gradient and Hessian of function $f(x, t)$ with respect to the vector $x$. Let $\otimes$ denote the Kronecker product and $\overline{\mathrm{co}}$ the convex closure.  

\subsection{Graph Theory}
An undirected graph, is denoted by $\G=(\V,\E,\A)$, where $\V=\{1,...,n\}$ is the node set, $\E\subseteq \V \times \V$ is the edge set, and $\A=[a_{ij}]\in \R^{n\times n}$ is the weighted adjacency matrix with entries $a_{ij},\ i,j\in\V$. For an undirected graph, an edge $(j,i)$ implies that node $i$ and node $j$ are able to share data with each other, and $a_{ij}=1$ if $(j,i) \in \E$ and $a_{ij}=0$ otherwise.  Here $a_{ij}=a_{ji}$. Let $\N_i=\{j\in \V: (j,i)\in \E\}$ denote the set of neighbors of node $i$. A path is a sequence of nodes connected by edges. An undirected graph is connected if for every pair of nodes there is a path connecting them. The Laplacian matrix $\La=[l_{ij}]\in \R^{n\times n}$ associated with $\A$ is defined as $l_{ii}=\sum_{j=1,j\neq i}^na_{ij}$ and $l_{ij}=-a_{ij}$, where $i\neq j$. The incidence matrix $\D=[d_{ij}]\in\R^{n\times |\E|}$ associated with $\G$ is defined as $d_{ik}=-1$ if the $k$th edge leaves node $i$, $d_{ik} = 1$ if it enters node $i$, and $d_{ik} = 0$ otherwise. For the incidence matrix of an undirected graph, the orientation of the edges is assigned arbitrarily. Note that for an undirected graph, $\La\1_n=\0_n$, $\La^T=\La$ and $\La=\D\D^T$.
\subsection{Nonsmooth Analysis}
In this subsection, we recall some important definitions of the nonsmooth systems that will be exploited in our main result.
\begin{definition}\label{definition:filippovsolution} \textit{(Filippov Solution)}\cite{nonsmooth1} Consider the vector differential equation
\begin{equation}\label{filippov}
\dot{x}=f(x,t),
\end{equation}
where $f: \R^d\times \R\rightarrow \R^d$ is measurable and locally essentially bounded. A vector function $x(\cdot)$ is called a Filippov solution of (\ref{filippov}) on $[t_0,t_1]$, if $x(\cdot)$ is absolutely continuous on $[t_0,t_1]$ and for almost all $t\in [t_0,t_1]$, $\dot{x}(t)\in K[f](x,t)$, where
$K[f](x,t):=\bigcap_{\delta>0} \bigcap_{\mu (N)=0} \overline{\mathrm{co}}f\big(B(x,\delta)-N,t\big)$ is the Filippov set-valued map of $f(x,t)$ and $\bigcap_{\mu (N)=0}$ denotes the intersection over all sets $N$ of Lebesgue measure zero.
\end{definition}

\begin{definition}\label{definition:generalizedgradient}\textit{(Clarke's Generalized Gradient)} \cite{nonsmooth1} Consider a locally Lipschitz continuous function $V(x): \R^d\rightarrow \R$, the generalized gradient of the function $V$ at $x$ is given by $
\partial V(x):=\overline{\mathrm{co}}\{\lim \nabla V(x)| x_i\rightarrow x, x_i\not\in\Omega_V \},$
where $\Omega_V$ is the set of measure zero where the gradient of $V$ is not defined. \end{definition}
\begin{definition}\label{deniftion:chainrule}\textit{(Chain Rule)}\cite{nonsmooth1} Let $x(\cdot)$ be a Filippov solution of $\dot{x}=f(x,t)$ and $V(x): \R^d\rightarrow \R$ be a locally Lipschitz continuous function. Then for almost all $t$,
$$
\frac{d}{dt}V[x(t)]\in\dot{\tilde{V}},
$$
where $\dot{\tilde{V}}$ is the set-valued Lie derivative defined as $\dot{\tilde{V}}:=\bigcap_{\xi \in \partial V}\xi^TK[f]$.
\end{definition}
\section{Main Results}
Consider a network consisting of $n$ agents. Each agent is regarded as a node in an undirected graph, and each agent can only interact with its local neighbors in the network. Suppose that each agent satisfies the following continuous-time dynamics
\begin{equation}\label{system}
    \dot{x}_i(t) = u_i(t),
\end{equation}
where $x_i(t)\in\R^m$ is the state of agent $i$, and $u_i(t)\in\R^m$ is the control input of agent $i$. In this work, we study the distributed time-varying optimization problem with time-varying nonlinear inequality constraints. The goal is to design $u_i(t)$ using only local information and interaction, such that all the agents work together to find the optimal trajectory $y^*(t)\in\R^m$ which is defined as
\begin{equation}\label{problem:primal}
\begin{aligned}
&y^*(t) = \argmin  \quad \sum_{i=1}^n f_i[y(t),t],\\
& \text{s.t.} \quad g_i[y(t),t]\leq \0_{q_i},\ i\in\V,
\end{aligned}
\end{equation}
where $f_i[y(t),t]: \R^m \times \R_{>0}\rightarrow \R$ are the local objective functions, and $g_i[y(t),t]: \R^m \times \R_{>0}\rightarrow \R^{q_i}$ are the local inequality constraint functions. It is assumed that $f_i[y(t),t]$ and $g_i[y(t),t]$ are known only to agent $i$. We assume that the local objective functions $f_i[y(t),t]$ and inequality constraint functions $g_i[y(t),t]$ are twice continuously differentiable with respect to $y(t)$ and continuously differentiable with respect $t$. 

If the underlying network is connected, the above problem \eqref{problem:primal} is equivalent to the problem that all the agents reach consensus while optimizing the team objective function $\sum\limits_{i=1}^n f_i[x_i(t),t]$ under constraints, more formally,
\begin{equation}\label{problem}
\begin{aligned}
&x^*(t)\in\R^{m*n} = \argmin  \quad \sum_{i=1}^n f_i[x_i(t),t],\\
& \text{s.t.} \quad g_i[x_i(t),t]\leq \0_{q_i},\quad x_i(t) = x_j(t), \quad \forall i,j\in\V,
\end{aligned}
\end{equation}
where $x(t)\in\R^{m*n}$ is the stack of all the agents$^\prime$ states. Here, the goal is that each state $x_i(t), \ \forall i\in\V$, converges to the optimal solution $y^*(t)$, i.e.,
\begin{equation}
\lim\limits_{t\to\infty}[x_i(t)-y^*(t)]=\0_m.
\end{equation}
This architecture of the distributed time-varying constrained optimization problem with networked agents finds broad applications in distributed cooperative
control problems, including multi-robot navigation \cite{app4,app3} and resource allocation of power network \cite{thesis1}. For notational simplicity, we will remove the time index $t$ from the variables $x_i(t)$ and $u_i(t)$ in most remaining parts of this paper and only keep it in some places when necessary.

\begin{lemma}\label{convex} \cite{Boyd} Let $f(r):R^m\to R$ be a continuously differentiable convex function with respect to $r$. The function $f(r)$ is minimized at $r^*$ if and only if $\nabla f(r^*)=0$.
\end{lemma}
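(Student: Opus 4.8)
The plan is to prove the two implications of the biconditional separately, isolating the single place where convexity is indispensable. For the necessity direction I would not use convexity at all. Assuming $r^*$ minimizes $f$ over the unconstrained domain $\R^m$, I fix an arbitrary direction $d\in\R^m$ and restrict $f$ to the line through $r^*$, defining the scalar function $\phi(\alpha):=f(r^*+\alpha d)$. Since $r^*$ is a global minimizer lying in the interior of the domain, $\alpha=0$ is an interior minimizer of the differentiable one-variable function $\phi$, so the scalar first-order condition yields $\phi'(0)=0$. By the chain rule $\phi'(0)=\nabla f(r^*)^T d$, and because $d$ is arbitrary this forces $\nabla f(r^*)=\0_m$.

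For the sufficiency direction, convexity carries the argument. The key tool is the first-order characterization of a differentiable convex function, the gradient inequality
\[
f(y)\geq f(x)+\nabla f(x)^T(y-x)\qquad\text{for all }x,y\in\R^m.
\]
Assuming $\nabla f(r^*)=\0_m$, I instantiate this inequality at $x=r^*$ with an arbitrary $y$; the linear term vanishes, leaving $f(y)\geq f(r^*)$ for every $y$, which is exactly the statement that $r^*$ is a global minimizer.

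The only step that requires care is justifying the gradient inequality itself, should one prefer a self-contained argument rather than quoting it directly from \cite{Boyd}. I would derive it from the definition of convexity: starting from $f\big((1-\theta)x+\theta y\big)\leq(1-\theta)f(x)+\theta f(y)$, I rewrite the convex combination as $x+\theta(y-x)$, rearrange to $\frac{f(x+\theta(y-x))-f(x)}{\theta}\leq f(y)-f(x)$ for $\theta\in(0,1]$, and let $\theta\downarrow 0$ so that the left-hand side converges to the directional derivative $\nabla f(x)^T(y-x)$. This limiting passage from the difference quotient to the gradient is the main (and essentially only) genuine obstacle; everything else is routine, and I expect the full proof to be short.
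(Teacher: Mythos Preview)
Your argument is correct and is essentially the standard textbook proof; the paper itself does not supply a proof of this lemma at all but simply cites it from \cite{Boyd}, whose Section~4.2.3 contains precisely the gradient-inequality argument you give for sufficiency (and the necessity direction is Fermat's condition, as you observe). There is nothing to compare beyond noting that you have written out what the reference leaves implicit.
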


We make the following assumptions which are all standard in the
literature and are used in recent works like \cite{Rahili,Fazlyab}. 
\begin{assumption}
\label{assumption:graph}
The graph $\G$ is fixed, undirected and connected.
\end{assumption}
\begin{assumption}\label{assumption:costfunction}
All the objective functions $f_i(x_i,t)$ are uniformly strongly convex in $x_i$, for all $t\geq 0$.
\end{assumption}

\begin{assumption}\label{assumption:costfunction:ineq}
All the constraints $g_i(x_i,t)$ are uniformly convex in $x_i$, for all $t\geq 0$.
\end{assumption}
\begin{assumption}\label{assumption:slater}
For all $t\geq 0$ and for all $i\in\V$, there exists at least one $y$ such that $g_i(y,t)<\0_{q_i}$. Therefore, the Slater$^\prime$s condition holds for all time.
\end{assumption}

The uniform strong convexity of the objective function implies that the optimal trajectory $y^*(t)$ is unique for all $t \geq 0$. 

\subsection{Distributed Algorithm Design}
In this subsection, we derive our distributed control algorithm for the time-varying constrained optimization problem in \eqref{problem}. 

We design the following controller for agent $i$:
\begin{equation}\label{algorithm}\begin{aligned}
u_i &= -\beta[\nabla^2\tilde{L}_i(x_i,t)]^{-1}\sum_{j\in\N_i} \sgn(x_i-x_j)+\phi_i(t),\\
\phi_i(t)&= -[\nabla^2\tilde{L}_i(x_i,t)]^{-1}\left[\nabla \tilde{L}_i(x_i,t)+\frac{\partial}{\partial t} \nabla \tilde{L}_i(x_i,t)\right],\\
\end{aligned}
\end{equation}
where $\beta\in\R_{>0}$ is a constant control gain, and $\tilde{L}_i(x_i,t)$ is a penalized objective function of agent $i$, defined as, 
\begin{equation}\label{equation:lagrangian}
\begin{aligned}
\tilde{L}_i(x_i,t)&=f_i(x_i,t)-\frac{1}{\rho(t)}\sum_{j=1}^{q_i} \log[1 -\rho(t)g_{ij}(x_i,t)],
\end{aligned}\end{equation}
where $g_{ij}(x_i,t): \R^m \times \R_{>0}\rightarrow \R$ denotes the $j-$th component of function $g_i(x_i,t)$, and $\rho(t)\in \R_{>0}$ is a time-varying barrier parameter satisfying
\begin{equation}\label{gain:rho}
\rho(t)=a_1e^{a_2t},\ a_1,a_2\in\R_{>0}.
\end{equation}
\begin{remark}
In this work, the time-varying optimization problem \eqref{problem} is deformed as a consensus subproblem and a minimization subproblem on the team objective function. We develop a distributed sliding-mode control law to address the consensus part. That is, the role of term $-\beta[\nabla^2\tilde{L}_i(x_i,t)]^{-1}\sum_{j\in\N_i} \sgn(x_i-x_j)$ in \eqref{algorithm} is to drive all the agents to reach a consensus on the states ($\lim\limits_{t\to\infty}\|x_i-\sum_{j=1}^nx_j\|_2=0$). Here, the Hessian-dependent gain $\beta[\nabla^2\tilde{L}_i(x_i,t)]^{-1}$ is introduced to guarantee the convergence of our algorithm under nonidentical $\nabla^2\tilde{L}_i(x_i,t)$. While the second term, $\phi_i(t)\in\R^m$, is an auxiliary variable playing a role in minimizing the penalized objective function $\tilde{L}_i(x_i,t)$ given by \eqref{equation:lagrangian}. Note that we use the log-barrier penalty functions (see the second term in \eqref{equation:lagrangian}) to incorporate the inequality constraints into the penalized objective function. As shown in \eqref{algorithm}, we use the second-order/Hessian information of the penalized objective function to achieve the optimization goal. 
\end{remark}
In addition, we have
\begin{equation}\label{equation:gradient}\begin{aligned}
\nabla\tilde{L}_i(x_i,t)&=\nabla f_i(x_i,t)+\sum_{j=1}^{q_i} \frac{\nabla g_{ij}(x_i,t)}{1-\rho(t) g_{ij}(x_i,t)},\\
\end{aligned}
\end{equation}
\begin{equation}\label{equation:partialgradient}\begin{aligned}
\frac{\partial}{\partial t}\nabla\tilde{L}_i(x_i,t)&=\frac{\partial }{\partial t}\nabla f_i(x_i,t)+\sum_{j=1}^{q_i} \frac{\partial \nabla g_{ij}(x_i,t)/\partial t}{1-\rho(t)g_{ij}(x_i,t)}\\
&\qquad+\sum_{j=1}^{q_i} \frac{\dot{\rho}(t)g_{ij}(x_i,t)\nabla g_{ij}(x_i,t)}{[1-\rho(t)g_{ij}(x_i,t)]^2},\\
&\qquad+\sum_{j=1}^{q_i}\frac{\rho(t)\nabla g_{ij}(x_i,t)\partial g_{ij}(x_i,t)/\partial t}{[1-\rho(t)g_{ij}(x_i,t)]^2},\\
\end{aligned}\end{equation}
\begin{equation}\label{equation:hessian}\begin{aligned}
\nabla^2\tilde{L}_i(x_i,t)&=\nabla^2f_i(x_i,t)+\sum_{j=1}^{q_i} \frac{\nabla^2 g_{ij}(x_i,t)}{1-\rho(t)g_{ij}(x_i,t)}\\
&\qquad+\sum_{j=1}^{q_i} \frac{\rho(t)\nabla g_{ij}(x_i,t)\nabla g_{ij}(x_i,t)^T}{[1-\rho(t)g_{ij}(x_i,t)]^2},\\
\end{aligned}\end{equation}
where $\frac{\partial}{\partial t}\nabla f_i(x_i,t)$, $\frac{\partial}{\partial t}\nabla g_{ij}(x_i,t)$ and $\frac{\partial}{\partial t}g_{ij}(x_i,t)$ are, respectively, the partial derivatives of $\nabla f_i(x_i,t)$, $\nabla g_{ij}(x_i,t)$ and $g_{ij}(x_i,t)$ with respect to $t$. To make the algorithm \eqref{algorithm} work, the initial states $x_i(0)$ need satisfy

\begin{equation}\label{ini:x}
g_{i}[x_i(0),0]< \0_{q_i}, \qquad \forall i\in\V.
\end{equation}
Also, for notational simplicity, we will remove the time index $t$ from the auxiliary variable $\phi_i(t)$ in most remaining parts of this paper and only keep it in some places when necessary.
\begin{remark}\label{remark:comparison}
In this work, we convert the considered constrained optimization problem into an unconstrained one using the log-barrier penalty functions. It is worth noting that the proposed algorithm \eqref{algorithm} is not a simple extension of the existing distributed time-varying unconstrained optimization algorithms in \cite{Rahili}. Especially, to apply the consensus-based algorithm in \cite{Rahili} (Section III.B), it is required that the Hessians of all the local objective functions be identical. In contrast, in our context with the penalized objective functions, the Hessians of them are nonuniform
due to the involvement of the nonuniform local constraint functions even if the original objective functions have identical Hessians. The estimator-based algorithm in \cite{Rahili} (Section III.C) can deal with certain objective functions with nonidentical Hessians. However, it not only necessitates the communication
of certain virtual variables between neighbors with increased computation costs, but requires that the derivatives of the Hessians and the gradients of the objective functions be bounded. Unfortunately, due to the complexity of the penalized objective functions in the considered constrained problem, such a requirement is no longer guaranteed to hold and hence the result therein is not applicable to our problem. In this paper, we introduce a novel algorithm with a Hessian-dependent gain to account for the complexity caused by the penalized objective functions. The novel algorithm design in turn introduces new challenges in theoretical analysis, which will be addressed in the following. 
\end{remark}
\begin{remark}
In algorithm \eqref{algorithm}, each agent just needs its own information and the relative states between itself and its neighbors. In some robotic applications, the agents' states are their spatial positions. As a result, the relative positions can be obtained by local sensing and the communication necessity might be eliminated. 
\end{remark}
\subsection{Convergence Analysis}
In this subsection, the asymptotical convergence of system \eqref{system} under controller \eqref{algorithm} to the optimal solution is established. To establish our results, we require the following assumptions.
\begin{assumption}\label{assumption:bound}
If all the local states $x_i$ are bounded, then there exists a constant $\bar{\alpha}$ such that $\sup\limits_{t\in[0,\infty)}\|\frac{\partial }{\partial t} \nabla f_i(x_i,t)\|_{2} \leq \bar{\alpha}$, for all $i\in\V$ and $t\geq 0$.
\end{assumption}
\begin{assumption}\label{assumption:bound:ineq}
If all the local states $x_i$ are bounded, then there exist constants $\bar{\beta}$ and $\bar{\gamma}$ such that $\sup\limits_{t\in[0,\infty)}\|\frac{\partial }{\partial t} \nabla g_{ij}(x_i,t)\|_{2} \leq \bar{\beta}$ and $\sup\limits_{t\in[0,\infty)}\|\frac{\partial }{\partial t} g_{ij}(x_i,t)\|_{2} \leq \bar{\gamma}$, for all $ i\in\V,\ j\in[1,\cdots,q_i]$ and $t\geq 0$. 
\end{assumption}
\begin{remark}
In Assumption \ref{assumption:bound}, we assume that all $\|\frac{\partial}{\partial t}\nabla f_i(x_i,t)\|_2$ are bounded under bounded $x_i$. The assumption holds for an important class of situations. For example, consider the normal quadratic objective functions $\|c_ix_i+h_i(x_i,t)\|^2_2$. As long as $\frac{\partial}{\partial t}h_i(x_i,t)$ (e.g. $\sin(t), t$) are bounded under bounded $x_i$,  $\|\frac{\partial}{\partial t}\nabla f_i(x_i,t)\|_2$ will be bounded. In Assumption \ref{assumption:bound:ineq}, we assume that all $\|\frac{\partial}{\partial t}\nabla g_{ij}(x_i,t)\|_2$ and $\|\frac{\partial}{\partial t}g_{ij}(x_i,t)\|_2$ are bounded under bounded $x_i$. The assumption holds for an important class of situations. The boundedness of $\|\frac{\partial}{\partial t}\nabla g_{ij}(x_i,t)\|_2$ and $\|\frac{\partial}{\partial t}g_{ij}(x_i,t)\|_2$ holds for most commonly used boundary constraint functions, e.g., $x_i\leq b(t)$ or $x^2_i\leq b(t)$ under bounded $\dot{b}(t)$. 
\end{remark}
\begin{remark}\label{remark:nonsmooth}
With the piecewise-differentiable signum function involved in algorithm \eqref{algorithm}, the solution should be investigated in the sense of Filippov \cite{cortes}. However, since the signum function is measurable and locally essentially bounded, the Filippov solutions of the proposed system dynamics always exist. Hence if the Lyapunov function candidates are continuously differentiable, the set-valued Lie derivative of them is a singleton at the discontinuous points and the proof still holds without employing the nonsmooth analysis. 
\end{remark}
In this work, we convert the considered constrained optimization problem into an unconstrained optimization problem using the log-barrier penalty functions. It is important that the log-barrier penalty function involved in \eqref{equation:lagrangian} is always well defined under our proposed algorithm. This is described in the next lemma.

\begin{lemma}\label{lemma:convex}
Suppose that \cref{assumption:slater} and the initial condition \eqref{ini:x} hold. For the system \eqref{system} under controller \eqref{algorithm}, each $x_i(t)$ belongs to set $D_i=\{x_i\in\R^m\ |\ g_i(x_i,t)< \frac{1}{\rho(t)}\1_{q_i}\}$ for all $t\geq 0$. That is, \eqref{equation:lagrangian} is always well defined. 
\end{lemma}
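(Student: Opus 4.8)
The plan is to monitor the penalized gradient $\nabla\tilde{L}_i(x_i(t),t)$ along the closed loop and to exploit that this gradient must blow up whenever $x_i$ approaches $\partial D_i$, i.e.\ whenever some $1-\rho(t)g_{ij}\to 0^+$. First I would record the base case: the initial condition \eqref{ini:x} gives $g_{ij}[x_i(0),0]<0<1/\rho(0)$ for every $j$, so $x_i(0)\in D_i$ and the barrier in \eqref{equation:lagrangian} is well defined at $t=0$. Let $[0,T)$ be the maximal interval on which \emph{every} $x_i(t)$ lies in the interior of $D_i$. On this interval \eqref{equation:hessian} together with \cref{assumption:costfunction,assumption:costfunction:ineq} gives $\nabla^2\tilde{L}_i\succeq\nabla^2 f_i\succ 0$ (the two barrier terms are positive semidefinite while $1-\rho g_{ij}>0$), so $[\nabla^2\tilde{L}_i]^{-1}$ exists and controller \eqref{algorithm} is well posed. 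The goal is to rule out $T<\infty$.

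The engine of the proof is that \eqref{algorithm} is designed precisely so that the penalized gradient obeys a stable linear dynamics. Differentiating along \eqref{system} and substituting $u_i$ from \eqref{algorithm},
\begin{equation*}
\frac{d}{dt}\nabla\tilde{L}_i=\nabla^2\tilde{L}_i\,u_i+\frac{\partial}{\partial t}\nabla\tilde{L}_i=-\nabla\tilde{L}_i-\beta\sum_{j\in\N_i}\sgn(x_i-x_j),
\end{equation*}
since, after multiplication by $\nabla^2\tilde{L}_i$, the auxiliary term $\phi_i$ contributes $-\nabla\tilde{L}_i-\frac{\partial}{\partial t}\nabla\tilde{L}_i$, whose second summand cancels the $\frac{\partial}{\partial t}\nabla\tilde{L}_i$ produced by the chain rule. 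Taking $W_i=\tfrac12\|\nabla\tilde{L}_i\|_2^2$ and using $\|\sum_{j\in\N_i}\sgn(x_i-x_j)\|_2\le\sqrt{m}\,|\N_i|$, I would obtain $\dot{W}_i\le-2W_i+\beta\sqrt{m}\,|\N_i|\sqrt{2W_i}$, hence by comparison $\|\nabla\tilde{L}_i(t)\|_2\le\max\{\|\nabla\tilde{L}_i(0)\|_2,\ \beta\sqrt{m}\,|\N_i|\}=:B_i$ uniformly on $[0,T)$. As the signum term is only set-valued on the switching surfaces and every element of its Filippov map still obeys the same norm bound, the nonsmoothness is harmless here, consistent with Remark~\ref{remark:nonsmooth}.

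Converting the uniform bound $B_i$ into invariance is the step I expect to be the main obstacle. Suppose $T<\infty$; then some $x_i(t)$ approaches a (finite) point $\bar{x}$ with $g_{ij}(\bar{x},T)=1/\rho(T)$ for the active indices $j$. In $\nabla\tilde{L}_i=\nabla f_i+\sum_j\nabla g_{ij}/(1-\rho g_{ij})$ the inactive terms stay bounded, so I must show the active terms force $\|\nabla\tilde{L}_i\|_2\to\infty$; this is not automatic, since anti-aligned active constraint gradients could in principle cancel. The resolution is \cref{assumption:slater}: the Slater point satisfies $g_{ij}<0<1/\rho$, so $D_i$ is convex with nonempty interior, its normal cone at $\bar{x}$ is pointed, and the active gradients $\nabla g_{ij}(\bar{x})$ (outward normals, by convexity) admit a common direction $w$ with $\langle w,\nabla g_{ij}(\bar{x})\rangle\ge c>0$. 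Then $\langle w,\nabla\tilde{L}_i\rangle\ge c\sum_{j\,\text{active}}\frac{1}{1-\rho g_{ij}}-O(1)\to\infty$, contradicting $\|\nabla\tilde{L}_i\|_2\le B_i$. Hence no active constraint can reach the barrier threshold, $T=\infty$, and every $x_i(t)$ remains in $D_i$ for all $t\ge 0$, so \eqref{equation:lagrangian} is always well defined.

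Two more technical points I would verify to close the argument rigorously. First, that the exit point $\bar{x}$ is finite, so that continuity of $g_{ij}$ applies at the putative exit; this is the milder failure mode (finite-time escape within the unbounded set $D_i$), which I would preclude by combining $\|\nabla\tilde{L}_i\|_2\le B_i$ with the Hessian lower bound $\nabla^2\tilde{L}_i\succeq\mu I$ and \cref{assumption:bound,assumption:bound:ineq} restricted to the relevant sublevel sets to keep $u_i$ integrable on $[0,T)$. Second, that the chain rule of Definition~\ref{deniftion:chainrule} legitimately yields the displayed gradient dynamics despite the signum discontinuity, which holds because $\nabla\tilde{L}_i$ is $C^1$ in $x_i$ so the set-valued Lie derivative of $W_i$ is the singleton computed above.
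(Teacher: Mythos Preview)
Your proposal is correct and follows essentially the same route as the paper: derive the closed-loop dynamics $\frac{d}{dt}\nabla\tilde{L}_i=-\nabla\tilde{L}_i-\beta\sum_{j\in\N_i}\sgn(x_i-x_j)$, conclude that $\nabla\tilde{L}_i$ stays bounded, and contrast this with the blow-up of $\nabla\tilde{L}_i$ at $\partial D_i$ to obtain invariance. In fact you are more careful than the paper's own proof, which simply asserts boundedness from \eqref{dotgradient} and unboundedness at the boundary without your explicit $B_i$ bound or your pointed-normal-cone argument ruling out cancellation among multiple active constraint gradients.
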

\begin{proof}
See Appendix \ref{appendix:convex}.
\end{proof}
In the following, in Lemma \ref{lemma:sumgradient}, we prove that the eventual states of the agents satisfy the optimal requirement shown in Lemma \ref{convex}, i.e., $\lim\limits_{t\to\infty}\sum\limits_{i=1}^n\nabla \tilde{L}_i[x_i(t),t] =0$. The goal of problem \eqref{problem} is that all the agents' states reach consensus on the optimal trajectory, and thus in Lemma \ref{lemma:consensus}, we prove that consensus can be achieved eventually if all $\phi_i$ in the controller \eqref{algorithm} are bounded, i.e., $\lim\limits_{t\to\infty}\|x_i(t)-\sum\limits_{j=1}^n x_j(t)\|=0$ can be achieved under bounded $\phi_i$. Then in Lemma \ref{lemma:phi:ineq}, we prove that all $\phi_i$ associated with the system \eqref{system} under controller \eqref{algorithm} are indeed bounded. Finally, in Theorem \ref{theorem}, we present that the original constrained optimization problem \eqref{problem} can be achieved, i.e., $\lim\limits_{t\to\infty}\|x_i(t)-y^*(t)\|_2 = 0$ for all $i\in\V$.
\begin{lemma}\label{lemma:sumgradient} 
Suppose that \cref{assumption:graph,assumption:costfunction,assumption:costfunction:ineq,assumption:slater}, the gain condition \eqref{gain:rho} and initial condition \eqref{ini:x} hold. For the system \eqref{system} under controller \eqref{algorithm}, the summation of all $\nabla \tilde{L}_i(x_i,t)$ satisfies $\lim\limits_{t\to\infty}\sum_{i=1}^n \nabla \tilde{L}_i(x_i,t)=\0_m$. 
\end{lemma}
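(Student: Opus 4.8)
The plan is to track the aggregate quantity $s(t):=\sum_{i=1}^n \nabla\tilde{L}_i(x_i,t)\in\R^m$ and show it satisfies a simple globally exponentially stable linear differential equation, so that it decays to $\0_m$. As a preliminary, I would first confirm that the controller \eqref{algorithm} is well defined along the trajectory. Lemma \ref{lemma:convex} guarantees $x_i(t)\in D_i$, hence $1-\rho(t)g_{ij}(x_i,t)>0$; combined with \cref{assumption:costfunction} (strong convexity of $f_i$) and \cref{assumption:costfunction:ineq} (convexity of $g_i$), expression \eqref{equation:hessian} shows that $\nabla^2\tilde{L}_i(x_i,t)$ is the sum of a positive definite term $\nabla^2 f_i$ and positive semidefinite terms, so it is invertible and $[\nabla^2\tilde{L}_i]^{-1}$ in \eqref{algorithm} exists for all $t\ge 0$.

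The core computation is to differentiate $\nabla\tilde{L}_i(x_i,t)$ along a solution. By the chain rule, for almost all $t$, $\frac{d}{dt}\nabla\tilde{L}_i(x_i,t)=\nabla^2\tilde{L}_i(x_i,t)\,\dot{x}_i+\frac{\partial}{\partial t}\nabla\tilde{L}_i(x_i,t)$. Substituting $\dot{x}_i=u_i$ from \eqref{algorithm}, the Hessian multiplies its own inverse, and crucially the auxiliary term $\phi_i$ has been engineered precisely so that the explicit time-derivative $\frac{\partial}{\partial t}\nabla\tilde{L}_i$ cancels, leaving the clean relation $\frac{d}{dt}\nabla\tilde{L}_i(x_i,t)=-\beta\sum_{j\in\N_i}\sgn(x_i-x_j)-\nabla\tilde{L}_i(x_i,t)$. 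This is the step where the feedforward design pays off, reducing a complicated expression to a gradient-tracking term plus the sliding-mode term.

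Summing over $i\in\V$ and using the undirected-graph symmetry then kills the sliding-mode contribution: for every unordered edge, the pair $\sgn(x_i-x_j)$ and $\sgn(x_j-x_i)$ are exact negatives since $\sgn$ is odd, so $\sum_{i=1}^n\sum_{j\in\N_i}\sgn(x_i-x_j)=\0_m$. What remains is the autonomous equation $\dot{s}(t)=-s(t)$, whose unique solution is $s(t)=e^{-t}s(0)$, giving $\lim_{t\to\infty}s(t)=\0_m$ and hence the claim.

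The main obstacle is rigor with the nonsmooth signum term, which makes $\dot{x}_i$ discontinuous and forces a Filippov interpretation. I would handle this exactly as anticipated in Remark \ref{remark:nonsmooth}: because $s(t)$ is a sum of continuous gradients and is absolutely continuous along the Filippov solution, its set-valued Lie derivative reduces to a singleton. Concretely, I must argue that the edgewise cancellation persists in the set-valued sense—at times with $x_i=x_j$ the Filippov selections $K[\sgn](x_i-x_j)$ and $K[\sgn](x_j-x_i)$ are drawn from the same antisymmetric difference, so the symmetric double sum remains $\0_m$ and $\dot{\tilde{s}}=\{-s\}$ is a singleton. Once this cancellation is justified, the linear ODE and its exponential decay follow without further appeal to nonsmooth machinery.
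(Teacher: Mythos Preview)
Your proposal is correct and follows essentially the same approach as the paper: compute $\frac{d}{dt}\nabla\tilde{L}_i$ along trajectories, sum over $i$, and exploit the undirected-graph antisymmetry to cancel the signum terms. The only cosmetic difference is that the paper wraps the same computation in the quadratic Lyapunov function $W_1=\tfrac{1}{2}\bigl\|\sum_i\nabla\tilde{L}_i\bigr\|^2$ and obtains $\dot W_1=-2W_1$, which is equivalent to your direct derivation of $\dot s=-s$.
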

\begin{proof}
See Appendix \ref{appendix:sumgradient}.
\end{proof}
\begin{lemma}\label{lemma:consensus}
Suppose that \cref{assumption:graph,assumption:costfunction,assumption:costfunction:ineq,assumption:slater}, the gain condition \eqref{gain:rho} and initial condition \eqref{ini:x} hold. For the system \eqref{system} under controller \eqref{algorithm}, if there exists a constant $\bar{\phi}$ such that $\sup\limits_{t\in[0,\infty)} \|\phi_i(t)\|_2\leq \bar{\phi},\ \forall i\in\V$ and $\beta$ satisfies that,
\begin{equation}\label{beta}
    \beta> \frac{2\bar{\phi}mn^2|\E|}{\min\{\lambda_{\min}[(\nabla^2\tilde{L}_i)^{-1}]\}},
\end{equation}
all the states $x_i$ will achieve consensus eventually, i.e., $\lim\limits_{t\to\infty}\|x_i(t)-x_j(t)\|_2=0, \ \forall i,j\in\V$.
\end{lemma}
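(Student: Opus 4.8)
The plan is to recast consensus as the vanishing of the edge-disagreement vector and to run a nonsmooth Lyapunov argument tailored so that the Hessian-dependent gain enters with a definite sign. Stacking \eqref{algorithm} and writing $x=[x_1^T,\dots,x_n^T]^T$, the sliding part becomes $-\beta M^{-1}(\D\otimes I_m)\SGN\!\big((\D^T\otimes I_m)x\big)$, where $M$ is the block-diagonal matrix with diagonal blocks $\nabla^2\tilde{L}_i$; this $M$ is positive definite, since by \cref{assumption:costfunction} each $\nabla^2 f_i$ is uniformly positive definite and by \eqref{equation:hessian} and \cref{assumption:costfunction:ineq} every log-barrier term contributes a positive semidefinite Hessian. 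Because $\G$ is connected (\cref{assumption:graph}), consensus is equivalent to $(\D^T\otimes I_m)x=\0$, so I would take the nonsmooth candidate
\[ V=\big\|(\D^T\otimes I_m)x\big\|_1=\sum_{(i,j)\in\E}\|x_i-x_j\|_1, \]
which is nonnegative and vanishes exactly on the consensus manifold.

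Because $\sgn$ is discontinuous I would read trajectories as Filippov solutions (Definition \ref{definition:filippovsolution}) and differentiate $V$ through the set-valued Lie derivative of Definition \ref{deniftion:chainrule} using Clarke's generalized gradient (Definition \ref{definition:generalizedgradient}). The nonsmooth $1$-norm is chosen on purpose: the $i$-th block of any element of $\partial V$ is $\sigma_i:=\sum_{j\in\N_i}\sgn(x_i-x_j)$ (with $\sgn$ read as $[-1,1]$ on its zero set), which is precisely the vector the sliding term contracts against. A set-valued evaluation along \eqref{system} then gives, for consistent Filippov selections,
\[ \dot{\tilde{V}}\subseteq\Big\{-\beta\sum_{i=1}^n\sigma_i^T(\nabla^2\tilde{L}_i)^{-1}\sigma_i+\sum_{i=1}^n\sigma_i^T\phi_i\Big\}. \]
The quadratic form now pairs $\sigma_i$ with itself, so the non-identical gains $(\nabla^2\tilde{L}_i)^{-1}$ appear as a genuinely negative-definite dissipation; had I used the smooth disagreement $\tfrac12 x^T(\La\otimes I_m)x$ the gradient block would have been $\sum_{j\in\N_i}(x_i-x_j)$ and $(\nabla^2\tilde{L}_i)^{-1}$ would have produced an indefinite cross term. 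Taming this gain is exactly the difficulty emphasized in Remark \ref{remark:comparison}.

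Next I would bound the two pieces. With $\lambda^*:=\min_i\lambda_{\min}[(\nabla^2\tilde{L}_i)^{-1}]>0$ one has $\sum_i\sigma_i^T(\nabla^2\tilde{L}_i)^{-1}\sigma_i\ge\lambda^*\sum_i\|\sigma_i\|_2^2$, while the standing hypothesis $\|\phi_i\|_2\le\bar{\phi}$ gives $\sum_i\sigma_i^T\phi_i\le\bar{\phi}\sum_i\|\sigma_i\|_2$. Since each $\sigma_i$ has integer entries, a nonzero $\sigma_i$ obeys $\|\sigma_i\|_2\ge1$ and hence $\sum_i\|\sigma_i\|_2^2\ge\sum_i\|\sigma_i\|_2$, so
\[ \dot{\tilde{V}}\le-(\beta\lambda^*-\bar{\phi})\sum_{i=1}^n\|\sigma_i\|_2. \]
A coarser chain of estimates---replacing the sharp integer bound by norm conversions $\|\cdot\|_2\le\|\cdot\|_1$ and summing the per-agent contributions over the at most $n^2$ ordered pairs and $|\E|$ edges---reproduces the explicit constant in \eqref{beta}; either way the stated lower bound on $\beta$ forces $\dot{\tilde{V}}<0$ whenever some $\sigma_i\neq\0$.

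Finally I would invoke the combinatorial fact that on a connected graph $\sigma_i=\0$ for all $i$ if and only if the states are in consensus: in any fixed coordinate the node attaining the maximum has all $\sgn(x_i-x_j)\ge0$, so $\sigma_i=\0$ forces equality with its neighbors, and \cref{assumption:graph} propagates this across the network. Thus the strict decrease holds on all of $\{V>0\}$ and drives $V\to0$, i.e.\ $\lim_{t\to\infty}\|x_i-x_j\|_2=0$. I expect the genuine obstacle to be the behavior on the discontinuity surface: at configurations where some $x_i-x_j$ has vanishing components the Filippov map makes $\sigma_i$ set-valued, so the max-node argument must be upgraded to show that the full set-valued Lie derivative $\dot{\tilde{V}}=\bigcap_{\xi\in\partial V}\xi^T K[f]$ still has strictly negative supremum off consensus, rather than admitting a selection with $\sigma_i=\0$ at a non-consensus point. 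A secondary, quieter obstacle is the uniform positivity of $\lambda^*$, which needs the states---and hence the barrier Hessians in \eqref{equation:hessian}---to remain bounded; that boundedness must be imported from \cref{lemma:convex} and the surrounding analysis rather than established here.
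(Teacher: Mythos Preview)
Your proposal is essentially the paper's own argument: the same nonsmooth Lyapunov candidate $W_2=\|(\D^T\otimes I_m)x\|_1$, the same Filippov/Clarke set-valued Lie derivative, and the same mechanism whereby the Hessian-dependent gain pairs the sliding term into a definite quadratic. The only cosmetic differences are that the paper works in edge coordinates with $\xi,\eta\in\SGN[(\D^T\otimes I_m)x]$ and resolves the discontinuity-surface issue you flag by exploiting the intersection in Definition~\ref{deniftion:chainrule} (choosing one $\xi$ for which $\xi^T(\D^T\otimes I_m)K[f]$ lies entirely in the negative reals whenever some edge disagrees), whereas you work node-wise with $\sigma_i$ and first obtain a sharper threshold before coarsening to~\eqref{beta}.
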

\begin{proof}
See Appendix \ref{appendix:consensus}.
\end{proof}
\begin{lemma}\label{lemma:phi:ineq}
Suppose that \cref{assumption:graph,assumption:costfunction,assumption:bound,assumption:costfunction:ineq,assumption:slater,assumption:bound:ineq}, the gain condition \eqref{gain:rho} and initial condition \eqref{ini:x} hold. For the system \eqref{system} under controller \eqref{algorithm}, all $\phi_i$ remain bounded. That is, there exists a constant $\bar{\phi}$ such that $\sup_{t\in[0,\infty)}\|\phi_i(t)\|_2\leq \bar{\phi}, \ \forall i\in\V$.
\end{lemma}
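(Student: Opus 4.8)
The plan is to bound the three factors in $\phi_i=-[\nabla^2\tilde L_i]^{-1}\big[\nabla\tilde L_i+\frac{\partial}{\partial t}\nabla\tilde L_i\big]$, with the crucial caveat that $\frac{\partial}{\partial t}\nabla\tilde L_i$ is in general \emph{not} bounded on its own: its divergent barrier part must be cancelled against $[\nabla^2\tilde L_i]^{-1}$. I would split the work into (i) bounds that need no extra regularity, (ii) state boundedness so that \cref{assumption:bound,assumption:bound:ineq} apply, and (iii) the structural cancellation.

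For (i), \cref{assumption:costfunction,assumption:costfunction:ineq} and Lemma~\ref{lemma:convex} make every barrier term in \eqref{equation:hessian} positive semidefinite (each $\nabla^2 g_{ij}\succeq 0$ and each $1-\rho g_{ij}>0$), so $\nabla^2\tilde L_i\succeq\nabla^2 f_i\succeq\underline m I$ with $\underline m:=\inf_{i,t}\lambda_{\min}(\nabla^2 f_i)>0$ and thus $\|[\nabla^2\tilde L_i]^{-1}\|_2\le 1/\underline m$. Differentiating $\nabla\tilde L_i$ along \eqref{system}--\eqref{algorithm}, the substitution $\nabla^2\tilde L_i\,\dot x_i=-\beta\sum_{j\in\N_i}\sgn(x_i-x_j)-\nabla\tilde L_i-\frac{\partial}{\partial t}\nabla\tilde L_i$ cancels the $\frac{\partial}{\partial t}\nabla\tilde L_i$ terms and yields the closed-loop identity $\frac{d}{dt}\nabla\tilde L_i=-\nabla\tilde L_i-\beta\sum_{j\in\N_i}\sgn(x_i-x_j)$ (in the Filippov sense, the signum being valued in $[-1,1]^m$), which is the same identity exploited in Lemma~\ref{lemma:sumgradient}. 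Since the sliding term has norm at most $\beta|\N_i|\sqrt m$, a comparison estimate on $\frac12\|\nabla\tilde L_i\|_2^2$ gives a uniform bound on $\|\nabla\tilde L_i\|_2$.

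For (ii), let $x_i^*(t)$ denote the minimizer of $\tilde L_i(\cdot,t)$; uniform strong convexity gives $\underline m\|x_i-x_i^*\|_2\le\|\nabla\tilde L_i\|_2$, and $x_i^*(t)$ is uniformly bounded by the standing assumptions, so $x_i$ is bounded and $g_{ij},\nabla g_{ij},\nabla^2 g_{ij},\nabla f_i$ are bounded; \cref{assumption:bound,assumption:bound:ineq} then bound $\frac{\partial}{\partial t}\nabla f_i,\frac{\partial}{\partial t}\nabla g_{ij},\frac{\partial}{\partial t}g_{ij}$. Moreover, because the barrier gradient $\nabla g_{ij}/(1-\rho g_{ij})$ in \eqref{equation:gradient} would blow up as $g_{ij}\to 1/\rho$ while $\nabla g_{ij}$ stays away from zero when $g_{ij}>0$ (convexity and Slater, \cref{assumption:slater}), the already-established boundedness of $\nabla\tilde L_i$ forces a uniform feasibility margin $1-\rho g_{ij}\ge\underline c>0$. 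This margin bounds both $\frac{\nabla^2 g_{ij}}{1-\rho g_{ij}}$ in \eqref{equation:hessian} and the $\frac{1}{1-\rho g_{ij}}$-weighted term of \eqref{equation:partialgradient}.

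The main obstacle is the rest of $\frac{\partial}{\partial t}\nabla\tilde L_i$. Combining the last two sums of \eqref{equation:partialgradient} and using $\dot\rho/\rho=a_2$ from \eqref{gain:rho} rewrites them as $\sum_j c_{ij}(a_2 g_{ij}+\frac{\partial}{\partial t}g_{ij})\nabla g_{ij}$ with $c_{ij}:=\rho/(1-\rho g_{ij})^2$, whereas the rank-one part of the Hessian is $\sum_j c_{ij}\nabla g_{ij}\nabla g_{ij}^T$. The scalar $a_2 g_{ij}+\frac{\partial}{\partial t}g_{ij}$ is bounded, but $c_{ij}$ grows like $\rho(t)\to\infty$ at (nearly) active constraints, so $\frac{\partial}{\partial t}\nabla\tilde L_i$ is genuinely unbounded; the point is that it diverges precisely along the directions in which $\nabla^2\tilde L_i$ diverges. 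Writing $\nabla^2\tilde L_i=H_i^0+P_iCP_i^T$ with $H_i^0\succeq\underline m I$ the bounded part, $P_i:=[\nabla g_{i1},\dots,\nabla g_{iq_i}]$ and $C:=\mathrm{diag}(c_{ij})\succeq 0$, the identity $[H_i^0+P_iCP_i^T]^{-1}P_iC=[H_i^0]^{-1}P_i\,[I+CP_i^T(H_i^0)^{-1}P_i]^{-1}$ absorbs the divergent $C$ into the bracketed inverse, so $[\nabla^2\tilde L_i]^{-1}\frac{\partial}{\partial t}\nabla\tilde L_i$ stays bounded uniformly in $\rho$. The delicate step is showing $[I+CP_i^T(H_i^0)^{-1}P_i]^{-1}$ remains uniformly bounded where $C$ is large, which reduces (as the single-constraint Sherman--Morrison computation shows) to a uniform lower bound on $\|\nabla g_{ij}\|_2$ at the barrier-active indices, again supplied by convexity and \cref{assumption:slater}. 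Combining this with $\|\nabla\tilde L_i\|_2$ bounded and $\|[\nabla^2\tilde L_i]^{-1}\|_2\le 1/\underline m$ bounds $\phi_i$ and furnishes the constant $\bar\phi$.
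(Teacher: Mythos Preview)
Your argument is correct in outline and takes a genuinely different route from the paper's. Two differences are worth noting.

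For the boundedness of $\nabla\tilde L_i$ and $x_i$, the paper first argues by contradiction that $x_i$ stays bounded (an informal ``assume $x_i=+\infty$'' argument based on \eqref{dotgradient}), then invokes boundedness of $1/(1-\rho g_{ij})$ along the trajectory, citing Lemma~2 of \cite{Fazlyab}, to conclude $\nabla\tilde L_i$ is bounded. You reverse the order: from the closed-loop identity $\frac{d}{dt}\nabla\tilde L_i=-\nabla\tilde L_i-\beta\sum_{j\in\N_i}\sgn(x_i-x_j)$ you bound $\nabla\tilde L_i$ directly by a comparison estimate, then pull $x_i$ back via strong convexity. Your route is cleaner, though it leans on the uniform boundedness of the local minimizers $x_i^*(t)$, which you should justify rather than cite as a standing assumption.

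The more substantive difference concerns $\frac{\partial}{\partial t}\nabla\tilde L_i$. The paper simply asserts that once $x_i$, $\nabla\tilde L_i$ and $\nabla^2\tilde L_i$ are bounded, ``it is easy to see'' that $\frac{\partial}{\partial t}\nabla\tilde L_i$ is bounded. You correctly flag that this does \emph{not} follow: the coefficients $\rho/(1-\rho g_{ij})^2$ in \eqref{equation:partialgradient} (and equally in \eqref{equation:hessian}) scale like $\rho(t)\to\infty$ whenever $g_{ij}$ is $O(1/\rho)$, which is precisely the regime at asymptotically active constraints. Your Woodbury-type cancellation, matching the divergent part $P_iCb$ of $\frac{\partial}{\partial t}\nabla\tilde L_i$ against the rank-one block $P_iCP_i^T$ in $\nabla^2\tilde L_i$, is the right repair and is absent from the paper's proof. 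The residual hypothesis you need---a uniform lower bound on $\|\nabla g_{ij}\|$ at barrier-active indices, or more generally that $P_i^T(H_i^0)^{-1}P_i$ restricted to the active block stays nonsingular---is an LICQ-type nondegeneracy condition not listed among the paper's assumptions; state it explicitly or give the short convexity/Slater argument you sketch.
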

\begin{proof}
See Appendix \ref{appendix:phi}.
\end{proof}
\begin{theorem}\label{theorem}
Suppose that \cref{assumption:graph,assumption:costfunction,assumption:bound,assumption:costfunction:ineq,assumption:bound:ineq,assumption:slater}, the initial condition \eqref{ini:x} and gain conditions \eqref{gain:rho} and \eqref{beta} hold. For the system \eqref{system} under controller \eqref{algorithm}, all the states $x_i$ will converge to the optimal solution $y^*(t)$ in \eqref{problem:primal} eventually.
\end{theorem}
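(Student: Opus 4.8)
The plan is to chain the three preceding lemmas and then close the argument with a log-barrier (interior-point) convergence estimate. First, \cref{lemma:phi:ineq} guarantees that every $\phi_i$ is bounded by some $\bar\phi$, so the hypothesis of \cref{lemma:consensus} is satisfied; since the gain condition \eqref{beta} is assumed, \cref{lemma:consensus} yields consensus, $\lim_{t\to\infty}\|x_i(t)-x_j(t)\|_2=0$ for all $i,j\in\V$. In parallel, \cref{lemma:sumgradient} gives $\lim_{t\to\infty}\sum_{i=1}^n\nabla\tilde L_i(x_i,t)=\0_m$. These two asymptotic facts are the raw material for the remainder of the proof.

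Next I would introduce the penalized team objective $F(x,t):=\sum_{i=1}^n\tilde L_i(x,t)$ together with its minimizer $\tilde x^*(t):=\argmin_x F(x,t)$. Because each $f_i$ is uniformly strongly convex (\cref{assumption:costfunction}) and each log-barrier term is convex on its domain $D_i$ (\cref{assumption:costfunction:ineq}, \cref{lemma:convex}), $F(\cdot,t)$ is uniformly strongly convex, so $\tilde x^*(t)$ is unique and $\nabla_x F(\tilde x^*(t),t)=\0_m$ by \cref{convex}. The aim of this step is to show that the agents' common state approaches $\tilde x^*(t)$. Writing $\bar x(t)=\frac1n\sum_j x_j(t)$, consensus makes each $x_i(t)-\bar x(t)\to\0_m$, so by continuity of the gradients the sums $\sum_i\nabla\tilde L_i(x_i,t)$ and $\sum_i\nabla\tilde L_i(\bar x,t)=\nabla_x F(\bar x,t)$ share the same limit $\0_m$. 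Uniform strong convexity then converts a vanishing gradient into a vanishing distance, $\|\bar x(t)-\tilde x^*(t)\|_2\to 0$, and combining with consensus gives $\|x_i(t)-\tilde x^*(t)\|_2\to 0$ for every $i\in\V$.

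It remains to show $\tilde x^*(t)\to y^*(t)$ as $t\to\infty$. Using the identity $1-\rho(t)g_{ij}=-\rho(t)[g_{ij}-1/\rho(t)]$, one sees that, up to an $x$-independent constant, $F(\cdot,t)$ is the standard log-barrier objective for the relaxed feasible set $\{x:g_{ij}(x,t)<1/\rho(t)\}$ with barrier weight $1/\rho(t)$. Since $\rho(t)=a_1e^{a_2t}\to\infty$, both the relaxation margin $1/\rho(t)$ and the barrier weight $1/\rho(t)$ vanish. Under Slater's condition (\cref{assumption:slater}) and convexity, the interior-point duality-gap estimate forces $\sum_i f_i(\tilde x^*(t),t)-\sum_i f_i(y^*(t),t)\to 0$ (the gap being of order $(\sum_i q_i)/\rho(t)$ together with the vanishing relaxation), so by uniform strong convexity $\|\tilde x^*(t)-y^*(t)\|_2\to 0$. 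Chaining this with the previous step through the triangle inequality yields $\lim_{t\to\infty}\|x_i(t)-y^*(t)\|_2=0$ for all $i\in\V$, which is the claim.

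I expect the interior-point convergence step to be the main obstacle: the barrier here keeps the states in the nonstandard relaxed region $g_{ij}<1/\rho(t)$, both the relaxation and the weight are time varying, and the data $f_i,g_{ij}$ and hence $y^*(t)$ all move in time, so the classical stationary duality-gap bound must be made uniform in $t$ and must account for the slight infeasibility $g_{ij}(\tilde x^*(t),t)<1/\rho(t)$. Care is also required in the second step to upgrade two separate asymptotic statements—consensus and a vanishing gradient sum—into convergence of $\bar x(t)$ to $\tilde x^*(t)$; this hinges on a uniform strong-convexity modulus for $F(\cdot,t)$ valid for all $t\geq 0$.
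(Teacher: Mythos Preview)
Your proposal is correct and follows essentially the same route as the paper: chain \cref{lemma:phi:ineq}, \cref{lemma:consensus}, and \cref{lemma:sumgradient} to drive every $x_i$ to the minimizer $\tilde y^*(t)$ of the penalized team objective, then use an interior-point duality-gap argument to push $\tilde y^*(t)$ to $y^*(t)$ as $\rho(t)\to\infty$. The only cosmetic difference is that the paper makes your ``vanishing relaxation'' step explicit by introducing the intermediate point $\hat y^*(t):=\argmin\sum_i f_i(y,t)$ subject to $g_{ij}(y,t)\le 1/\rho(t)$ and splitting the gap as $|\sum_i f_i(\tilde y^*)-\sum_i f_i(\hat y^*)|\le\sum_j q_j/\rho(t)$ (barrier suboptimality) plus $|\sum_i f_i(\hat y^*)-\sum_i f_i(y^*)|\le\sum_{j,k}\nu_{jk}^*(t)/\rho(t)$ (constraint perturbation via KKT/\cref{assumption:slater}).
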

\begin{proof}
Define 
\begin{equation}\label{ystar:ineq}
\tilde{y}(t)^*\in\R^{m}=\text{argmin}\sum_{i=1}^n \tilde{L}_i[y(t),t],
\end{equation}
where $\tilde{L}_i[y(t),t]$ is each agent's penalized objective function defined by \eqref{equation:lagrangian}. Note that \cref{assumption:graph,assumption:costfunction,assumption:bound,assumption:costfunction:ineq,assumption:slater,assumption:bound:ineq}, the initial condition \eqref{ini:x} and gain condition \eqref{gain:rho} hold. It follows from Lemma \ref{lemma:phi:ineq} that all $\phi_i$ associated with the system \eqref{system} under controller \eqref{algorithm} are bounded for all $t\geq 0$, which in turn implies that $x_i(t)=x_j(t),\ \forall i,j\in\V$ eventually according to Lemma \ref{lemma:consensus}. Moreover, based on Lemma \ref{lemma:sumgradient} we know that $\lim\limits_{t\to\infty} \sum\limits_{i=1}^n\nabla \tilde{L}_i(x_i,t)=\0_m$. Using a similar analysis to that in Lemma \ref{lemma:sumgradient}, we have each $\tilde{L}_i(x_i,t)$ is continuously differentiable and strongly convex in $x_i$. Then it follows from Lemma \ref{convex} that all $x_i$ will converge to the optimal solution $\tilde{y}^*(t)$ in \eqref{ystar:ineq}, i.e., $\lim\limits_{t\to\infty} x_i(t)=\tilde{y}^*(t),\ \forall i\in\V$.
		
Define 
$$
\begin{aligned}
&\hat{y}^*(t)\in\R^m=\text{argmin}\sum_{i=1}^n f_i[y(t),t]\\
&\text{s.t.}\quad g_{ij}[y(t),t]\leq \frac{1}{\rho(t)},\ \forall i\in\V, j=1,\cdots,q_i. 
\end{aligned}
$$
According to \cite{Boyd} (Sec. 11.2), we know that
$$
\left|\sum\limits_{i=1}^nf_i[\hat{y}^*(t),t]-\sum\limits_{i=1}^nf_i[\tilde{y}^*(t),t]\right| \leq \sum\limits_{j=1}^n\frac{q_j}{\rho(t)}.
$$
Note that $y^*(t)\in\R^m$ is the optimal solution of problem \eqref{problem:primal}. Under Assumption \ref{assumption:slater}, the optimal solution $y^*(t)$ can be characterized using the Karush–Kuhn–Tucker (KKT) conditions for all $t\geq 0$. Then based on \cite{Boyd} (Sec. 5.9), we have
$$
\left|\sum\limits_{i=1}^nf_i[\hat{y}^*(t),t]-\sum\limits_{i=1}^nf_i[y^*(t),t]\right| \leq \sum_{j=1}^n \sum_{k=1}^{q_j} \frac{\nu^*_{jk}(t) }{\rho(t)},
$$
where $\nu_{jk}(t)$ are the Lagrangian multipliers corresponding to the inequality constraint defined in \eqref{problem:primal}, and $\nu_{jk}^*(t)$ are the optimal Lagrangian multipliers.  
Hence, because $\lim_{t\rightarrow \infty}\rho(t) = \infty$, we have
$$
\lim_{t\rightarrow\infty}\left|\sum\limits_{i=1}^nf_i[y^*(t),t]-\sum\limits_{i=1}^nf_i[\tilde{y}^*(t),t]\right| = 0.
$$
Under Assumption \ref{assumption:costfunction}, the optimal solution $y^*(t)$ is unique, which indicates that $\lim\limits_{t\to\infty} x_i(t)=y^*(t),\ \forall i\in\V$.
\end{proof}

\section{NUMERICAL SIMULATION RESULTS}
\begin{figure}[!htb]
		\centering
		\begin{tikzpicture}[<->,>=stealth',shorten >=1pt,auto,node distance=1.1cm,
		thick,main node/.style={circle,fill=blue!20,draw,font=\sffamily\tiny\bfseries,scale=0.85}]
		\node[main node] (1) {1};
		\node[main node] (2) [right of=1] {2};
		\node[main node] (3) [right of=2] {3};
		\node[main node] (4) [right of=3] {4};
		\node[main node] (5) [right of=4] {5};	
		\node[main node] (6) [below of=2] {6};
		\node[main node] (7) [below of=4] {7};
		\node[main node] (9) [below of=6] {9};
		\node[main node] (10) [right of=9] {10};
		\node[main node] (8) [below of=1] {8};
		\node[main node] (11) [below of=7] {11};
		\node[main node] (12) [below of=5] {12};
		\path[every node/.style={font=\sffamily\small}]
		(1) edge  node   {} (2)
		(2) edge  node   {} (3)
		(1) edge  node   {} (8)
		(5) edge  node   {} (12)
		(3) edge  node   {} (4)
		(4) edge  node   {} (5)
	    (3) edge  node   {} (6)
	    (3) edge  node   {} (7)
		(6) edge  node   {} (10)
	    (7) edge  node   {} (10)
		(8) edge  node   {} (9)
        (9) edge  node   {} (10)
        (10) edge  node {} (11)
        (11) edge  node {} (12);
		\end{tikzpicture}
		\caption{An undirected graph. \label{undirectedgraph}}
\end{figure}
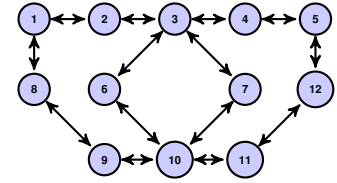
\begin{figure}[!htb]
		\centering
		\includegraphics[width=0.8\linewidth, height=0.23\textheight]{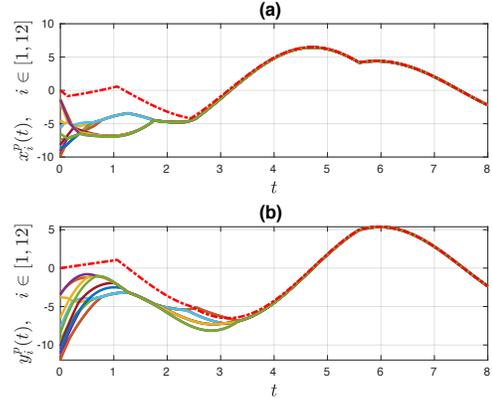}
		\caption{State trajectories of all the agents corresponding to Theorem \ref{theorem}. The red dashed line is the optimal solution and the other solid lines are the trajectories of all agents' states.}
		\label{result_trajectory}
\end{figure}
\begin{figure}[!htb]
	\centering
	\includegraphics[width=0.8\linewidth, height=0.23\textheight]{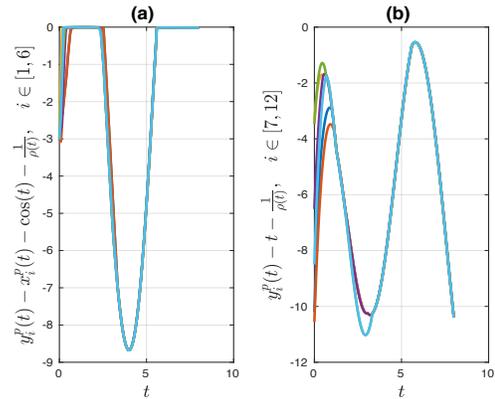}
	\caption{(a) The constraint results of agents $1-6$.\\ $\qquad\quad\ $ (b) The constraint results of agents $7-12$.}
	\label{result_constraint}
\end{figure}
In this section, we use a simulation case to illustrate Theorem \ref{theorem}. We consider a network with $n=12$ and $m=2$. The network topology is shown by the undirected graph in Figure \ref{undirectedgraph}. Let $x_i=[x_i^p,y^p_i]^T\in\R^2$ denote the states of each agent and consider the following optimization problem:
\begin{equation}\begin{aligned}
    \min &\quad\sum\limits_{i=1}^{12} \frac{1}{2}[x^p_i(t)+i\sin(t)]^2+\frac{3}{2}[y^p_i(t)-i\cos(t)]^2,\\
    \text{s.t.} &\quad y^p_j(t)-x^p_j(t)-\cos(t)\leq0,\ \forall j\in[1,2,\cdots,6],\\
    &\quad y^p_k(t)-t\leq0,\ \forall k\in[7,8,\cdots,12],\\
    &\quad x_i^p = x_j^p\  \text{and}\  y_i^p = y_j^p,\ \forall i,j\in\V.
\end{aligned}\end{equation}

The initial states $x^p_i(0),\ i\in\V$ are generated randomly from the range $[-10,0]$, and $y_i^p(0)=x_i^p(0)-2,\ i\in\V$. Therefore, the initial condition \eqref{ini:x} is satisfied. We choose $\beta=25$, $\rho(t)=100\exp(0.1t)$. Therefore, the gain condition \eqref{gain:rho} is satisfied. The state trajectories of all the agents are shown in Figure \ref{result_trajectory}. We can see that all the agents are able to track the optimal trajectory without tracking errors eventually which is consistent with Theorem \ref{theorem}. The constraint result is shown in Figure \ref{result_constraint}. In our simulation, agents $1-6$ are assigned to the constraint function $y^p_i(t)-x^p_i(t)-\cos(t)\leq 0, \ i\in[1,\cdots,6]$ , so $y^p_i(t)-x^p_i(t)-\cos(t)-1/\rho(t), \ i\in[1,\cdots,6]$ always remain negative. Agents $7-12$ are assigned to the constraint function $y^p_i(t)-t\leq 0, \ i\in[7,\cdots,12]$ , and thus $y^p_i(t)-t-1/\rho(t), \ i\in[7,\cdots,12]$ always remain negative. \\
\section{CONCLUSIONS}

In this paper, we have studied the distributed continuous-time constrained optimization problem with time-varying objective functions and time-varying constraints. The goal is that a set of networked agents cooperate to track the time-varying optimal solution that minimizes the summation of all the local time-varying objective functions subject to all the local time-varying constraints, where each agent can only receive information from its neighbors. We propose a distributed sliding-mode algorithm built on the Hessian-based optimization methodology. We have shown that asymptotical convergence is guaranteed under some reasonable assumptions. Numerical simulation result is given to illustrate the theoretical algorithm.

\begin{appendices}
\section{Proof of Lemma \ref{lemma:convex}}\label{appendix:convex}
Assumption \ref{assumption:slater} ensures the existence of the initial condition \eqref{ini:x}. Moreover, the derivative of $\nabla \tilde{L}_i(x_i,t)$ is given by
\begin{equation}\label{a1}
\dot{\nabla} \tilde{L}_i(x_i,t)=\nabla^2\tilde{L}_i(x_i,t) \dot{x}_i +\frac{\partial}{\partial t}\nabla \tilde{L}_i(x_i,t).
\end{equation}
Substituting the solution of \eqref{system} with \eqref{algorithm} into \eqref{a1} leads to
\begin{equation}\begin{aligned}\label{dotgradient}
\dot{\nabla} \tilde{L}_i(x_i,t)&=-\beta\sum\limits_{j\in\N_i}\SGN(x_i-x_j)-\nabla \tilde{L}_i(x_i,t),
\end{aligned}
\end{equation}  
where $\SGN(\cdot)$ \footnote{With the piecewise-differentiable signum function involved in algorithm \eqref{algorithm}, the solution of \eqref{system} with \eqref{algorithm} should be replaced by inclusions at a point of discontinuity. It follows from \cite{cortes} that the $\text{SGN}$ function in Equation \eqref{SGN} is the Filippov set-valued map of the signum function which considers inclusions at the discontinuity point $z = 0$ .} is the multivalued function defined as
\begin{equation}\label{SGN}
\SGN(z)=\left\{\begin{array}{ll}1&\text{if} \ z > 0,\\
\left[-1,1\right]&\text{if} \ z = 0,\\
-1&\text{if} \ z < 0.\end{array} \right.
\end{equation} 
It is obvious that each $\nabla\tilde{L}_i(x_i,t)$ remains bounded for all $t\geq 0$. Notice that \eqref{equation:gradient} implies that $\nabla\tilde{L}_i(x_i,t)$ is unbouned at the boundary of $D_i$. Therefore, it follows from the initial condition \eqref{ini:x} that each $x_i$ is in the set $D_i=\{x_i\in\R^m\ |\ g_i(x_i,t)< \frac{1}{\rho(t)}\1_{q_i}\}$ for all $t\geq 0$. That is, \eqref{equation:lagrangian} is always well defined.

\section{Proof of Lemma \ref{lemma:sumgradient}} \label{appendix:sumgradient}
It follows from Assumption \ref{assumption:costfunction} that all $f_i(x_i,t)$ are strongly convex in $x_i$. Also it follows from Assumption \ref{assumption:costfunction:ineq} that all $g_{ij}(x_i,t)$ are convex in $x_i$. From the gain condition \eqref{gain:rho}, we know that $\rho(t)$ is always positive. Then it follows from the initial condition \eqref{ini:x} that $\tilde{L}_i(x_i,t)$ given by \eqref{equation:lagrangian} must be continuously differentiable and strongly convex in $x_i$ if $x_i$ is in the set $D_i=\{x_i\in\R^m\ |\ g_i(x_i,t)< \frac{1}{\rho(t)}\1_{q_i}\}$. Note that \cref{assumption:slater} and the initial condition \eqref{ini:x} hold. Lemma \ref{lemma:convex} has indicated that each $x_i$ is indeed the case. Therefore, each $\tilde{L}_i(x_i,t)$ must be continuously differentiable and strongly convex in $x_i$ based on our algorithm. 
Consider the Lyapunov function candidate,
\begin{equation}
W_1=\frac{1}{2}\left[\sum_{i=1}^n \nabla \tilde{L}_i(x_i,t)\right]^T\left[\sum_{i=1}^n \nabla \tilde{L}_i(x_i,t)\right].
\end{equation}
Note that the Lyapunov candidate $W_1$ is continuously differentiable. Based on the statements in Remark \ref{remark:nonsmooth}, we do not need to employ nonsmooth analysis in the stability analysis. 
Then we have
\begin{equation}\label{a2}
\begin{aligned}
\dot{W}_1(t) &=\left[\sum_{i=1}^n \nabla \tilde{L}_i(x_i,t)\right]^T\\
&\qquad\times\left[\sum_{i=1}^n \nabla^2\tilde{L}_i(x_i,t)\dot{x}_i+\frac{\partial}{\partial t}\nabla \tilde{L}_i(x_i,t)\right].
\end{aligned}\end{equation}
Substituting the solution of \eqref{system} with \eqref{algorithm} into \eqref{a2} leads to
$$\begin{aligned}
\dot{W}_1(t) &=\left[\sum_{i=1}^n \nabla \tilde{L}_i(x_i,t)\right]^T\left(\sum_{i=1}^n \nabla^2\tilde{L}_i(x_i,t)\right.\\
&\times\left\{[\nabla^2\tilde{L}_i(x_i,t)]^{-1}
\beta\sum_{j\in\N_i}\sgn(x_j-x_i)+\phi_i\right\}\\
&\qquad+\left.\frac{\partial}{\partial t}\nabla \tilde{L}_i(x_i,t)\right).
\end{aligned}
$$
Since the network is undirected (Assumption \ref{assumption:graph}), we have $\sum\limits_{i=1}^n\beta\sum\limits_{j\in\N_i}\sgn(x_j-x_i)=\0_m$ for all $t\geq 0$. It follows that
$$
\dot{W}_1(t) =\left[\sum_{i=1}^n \nabla \tilde{L}_i(x_i,t)\right]^T\left[-\sum_{i=1}^n\nabla \tilde{L}_i(x_i,t)\right]=-2 W_1(t).	
$$
indicating that $W_1(t)=e^{-2t}W_1(0)$ for all $t\geq 0$. It can be concluded that $\lim\limits_{t\to\infty} W_1(t)=0$, and thus $\lim\limits_{t\to\infty}\sum_{i=1}^n \nabla \tilde{L}_i(x_i,t)=\0_m$.


\section{Proof of Lemma \ref{lemma:consensus}}\label{appendix:consensus}
Define
$$
\begin{aligned}
\left[\nabla^2\tilde{L} (x,t)\right]^{-1}&=\text{diag}\left\{[\nabla^2\tilde{L}_1(x_1,t)]^{-1},\right.\\
&\qquad\left.\cdots,[\nabla^2\tilde{L}_n(x_n,t)]^{-1}\right\},\\
x&=[x_1^T,\cdots,x_n^T]^T,\\
\Phi&=[\phi_1^T,\cdots, \phi_n^T]^T.
\end{aligned}$$
Consider the Lyapunov candidate 
\begin{equation}
W_2(t)=\| (\D^T \otimes I_{m} )x\|_1.
\end{equation}
The solution of \eqref{system} with \eqref{algorithm} can be written in compact form as
\begin{equation}\label{equation:dynamical}
\dot{x}=-\beta [\nabla^2\tilde{L}(x,t)]^{-1}(\D\otimes I_m)\sgn[(\D^T\otimes I_m) x]+\Phi.
\end{equation}
It is obvious that $W_2(t)$ is locally Lipschitz continuous but nonsmooth at some points. Then according to Definition \ref{definition:generalizedgradient}, the generalized gradient of $W_2(t)$ is given by
\begin{equation}
\partial W_2(t) =  (\D^T \otimes I_{m} ) ^T\{\SGN[(\D^T\otimes I_{m})x]\},
\end{equation}
where $\SGN(\cdot)$ is defined in \eqref{SGN}. Then based on Definition \ref{deniftion:chainrule}, the set-valued Lie derivative of $W_2(t)$ is given by
\begin{equation}\label{dotw2}\begin{aligned}
\dot{\tilde{W}}_2(t) &=\bigcap_{\xi \in \SGN[(\D^T\otimes I_m)x]}\xi^T(\D^T\otimes I_m)K[f],
\end{aligned}\end{equation}
where $K[f]=\Phi-\beta [\nabla^2\tilde{L}(x,t)]^{-1} (\D \otimes I_{m})\SGN [(\D^T\otimes I_{m})x]$ is the set-valued Filippov map of the dynamical system \eqref{equation:dynamical}.

Since there is an intersection operation on the right side of \eqref{dotw2}, it follows that as long as $\dot{\tilde{W}}_2(t)$ is not empty and there exists $\xi \in \SGN[(\D^T\otimes I_m)x]$ such that $\xi^T(\D^T\otimes I_m)\tilde{f}<0,\ \forall \tilde{f}\in K[f]$, then the result of $\dot{\tilde{W}}_2(t)$ falls into the negative half plane of the real axis. Arbitrarily choose $\eta\in\SGN[(\D^T\otimes I_{m})x]$. Choose $\xi_k=\sgn[(\D^T\otimes I_m)_{k\bullet}x]$ if $\sgn[(\D^T\otimes I_m)_{k\bullet}x]\neq 0$ and $\xi_k=\eta_k$ if $\sgn[(\D^T\otimes I_m)_{k\bullet}x]= 0$, where $\xi_k$ and $\eta_k$ denote the $k$th element in vectors $\xi$ and $\eta$ respectively. If $\dot{\tilde{W}}_2(t)\neq \emptyset$, suppose that $\tilde{a}\in\dot{\tilde{W}}_2(t)$. It follows that
\begin{equation}\begin{aligned}
\tilde{a} &=-\beta \{\xi^T (\D^T \otimes I_{m} )[\nabla^2\tilde{L}(x,t)]^{-1}(\D \otimes I_{m})\eta\}\\
&\qquad\quad\qquad+\xi^T (\D^T \otimes I_{m} )\Phi\\
&\leq -\beta \{\xi^T (\D^T \otimes I_{m} )[\nabla^2\tilde{L}(x,t)]^{-1}(\D \otimes I_{m})\xi\}\\
&\qquad\quad\qquad+\xi^T (\D^T \otimes I_{m} )\Phi\\
&\leq -\beta\lambda_{\min}[(\nabla^2\tilde{L})^{-1}]\|(\D\otimes I_m)\xi\|_2^2+2\bar{\phi}mn^2|\E|,
\end{aligned}\end{equation}
If there exists an edge $(i_2, j_2)\in\E$ such that $x_{i_2}\neq x_{j_2}$, then $\|(\D\otimes I_m)\xi\| \geq 1$. It follows that
\begin{equation} \tilde{a}\leq-\beta\lambda_{\min}[(\nabla^2\tilde{L})^{-1}]+2\bar{\phi}mn^2|\E|.
\end{equation}
Since $\beta> \frac{2\bar{\phi}mn^2|\E|}{\min\{\lambda_{\min}[(\nabla^2\tilde{L}_i)^{-1}]\}}=\frac{2\bar{\phi}mn^2|\E|}{\lambda_{\min}[(\nabla^2\tilde{L})^{-1}]}$, it follows that if there exists an edge $(i_2, j_2)\in\E$ such that $x_{i_2}\neq x_{j_2}$, then $\tilde{a}< 0$. It is clear that $W_2(t)$ converges to zero eventually. That is, all agents reach a consensus eventually, i.e., $\lim\limits_{t\to\infty}\|x_i(t)-\sum\limits_{j=1}^nx_j(t)\|_2=0$ for all $i\in\V$. 

\section{Proof of Lemma \ref{lemma:phi:ineq}}\label{appendix:phi}
To begin with, we prove that each $x_i$ associated with the system \eqref{system} under controller \eqref{algorithm} remains in a bounded region, which in turn guarantees that all $\phi_i$ are bounded. Note that \cref{assumption:costfunction,assumption:costfunction:ineq,assumption:slater}, the initial condition \eqref{ini:x}, and the gain condition \eqref{gain:rho} hold. Then using a similar analysis to that in Lemma \ref{lemma:sumgradient}, we have each $\tilde{L}_i(x_i,t)$ is continuously differentiable and strongly convex in $x_i$. The derivative of $\nabla \tilde{L}_i(x_i,t)$ is shown in \eqref{dotgradient}. Assume that there exists at least one $x_i$ such that $x_i=+\infty\ \text{or} -\infty$. Then due to the strongly convexity and the continuously differentiability of $\tilde{L}_i(x_i,t)$, it is easy to show that $\nabla\tilde{L}_i(+\infty,t)=+\infty$. Note that \cref{assumption:graph,assumption:costfunction,assumption:costfunction:ineq,assumption:slater}, the initial condition \eqref{ini:x}, and the gain condition \eqref{gain:rho} hold. Then it follows from Lemma \ref{lemma:sumgradient} that it is impossible that all $x_i$ go to infinity at the same time. Without loss of generality, let us assume that $x_i=+\infty=\max\limits_{j\in\V}x_j$. It follows that $-\beta\sum\limits_{j\in\N_i}\SGN(x_i-x_j)\leq 0$ when $x_i=+\infty$. Therefore, from \eqref{dotgradient}, it is clear that $\dot{\nabla} \tilde{L}_i(x_i,t)$ must be negative when $x_i=+\infty$. Similarly, $\dot{\nabla} \tilde{L}_i(x_i,t)$ must be positive when $x_i=-\infty$. The decreasing $\nabla \tilde{L}_i(x_i,t)$ when $x_i=+\infty$ and increasing $\nabla \tilde{L}_i(x_i,t)$ when $x_i=-\infty$ will result in a bounded $\nabla\tilde{L}_i(x_i,t)$ and a bounded $x_i$, which contradicts with the unbounded $x_i$ assumption. Hence all $x_i$ must be bounded. 

Then, we will prove that all $\nabla\tilde{L}_i(x_i,t)$ are bounded for all time. It follows from Lemma \ref{lemma:sumgradient} that $\sum\limits_{i=1}^n\nabla \tilde{L}_i(x_i,t)$ is always bounded. Since all $x_i$ are bounded, we have all $\nabla f_i(x_i,t)$ and $\nabla g_{ij}(x_i,t)$ must be bounded. Then using an argument similar to Lemma 2 in \cite{Fazlyab}, all $ \frac{1}{1-\rho(t)g_{ij}(x_i,t)}$ are bounded. Therefore each $\nabla\tilde{L}_i(x_i,t)$ is always bounded for all time and for all $i\in\V$.
	
Next, we will prove that all $[\nabla^2\tilde{L}_i(x_i,t)]^{-1}$ are bounded for all time. Since all $\tilde{L}_i(x_i,t)$ are continuous differentiable and strongly convex in its corresponding $x_i$, then based on the statements in Section 9.1.2 in \cite{Boyd}, we know that all $\nabla^2\tilde{L}_i(x_i,t)$ satisfy $$m(t) I_n\leq\nabla^2\tilde{L}_i(x_i,t)\leq M(t) I_n,$$
with $m(t), M(t) \in\R_{>0}$, which implies that all $[\nabla^2\tilde{L}_i(x_i,t)]^{-1}$ are bounded and positive definite for all time. 
	
At last, given that all $\nabla\tilde{L}_i(x_i,t)$ and $\nabla ^2 \tilde{L}_i(x_i,t)$ are bounded for all time, under \cref{assumption:bound,assumption:bound:ineq,assumption:bound}, it is easy to see that all $\frac{\partial}{\partial t}\nabla\tilde{L}_i(x_i,t)$ remain bounded for all time.
	
Since $[\nabla^2\tilde{L}_i(x_i,t)]^{-1}$, $\nabla\tilde{L}_i(x_i,t)$ and $\frac{\partial}{\partial t}\nabla\tilde{L}_i(x_i,t)$ are bounded for all $i\in\V$ and for all $t\geq 0$, we can get the conclusion that $\phi_i(t)$ is bounded for all $i\in\V$ and for all $t\geq0$.

\end{appendices}
\end{document}